\newtheorem{thm}{Theorem}
\newtheorem{lem}{Lemma}
\newcommand{\case}{\textbf}
\tikzset{vertex/.style={draw, fill, inner sep = .05cm, circle}}
\tikzset{Lvertex/.style={draw, semithick, inner sep = .05cm, circle}}
\tikzset{edge/.style={}}
\tikzset{Ledge/.style={dashed}}
\tikzset{part1/.style={red}}
\tikzset{part2/.style={cyan}}%, double}}
\renewcommand{\ss}{\subseteq} % replaces the beta lookin' German symbol
\newcommand{\nss}{\not\subseteq}
\newcommand{\sizeof}[1]{\left| #1 \right|}
\newcommand{\iso}{\cong}
\newenvironment{ctikz}{\begin{center}\begin{tikzpicture}}{\end{tikzpicture}\end{center}}
\begin{document}

\bibliographystyle{amsplain}
%\nocite{*}

\title{Characterization of Planar, 4-Connected, $K_{2,5}$-minor-free Graphs}

\author{Zach Gaslowitz\footnote{Department of Mathematics, Vanderbilt University, Nashville, Tennessee, U.S.A. (\textit{j.zachary.gaslowitz@vanderbilt.edu})} \qquad Emily A. Marshall\footnote{Department of Mathematics, Louisiana State University, Baton Rouge, Louisiana, U.S.A. (\textit{emarshall@lsu.edu})} \qquad Liana Yepremyan\footnote{School of Computer Science, McGill University, Montreal, Quebec, Canada (\textit{liana.yepremyan@mail.mcgill.ca})}}

\date{\today}

\maketitle

\begin{abstract}
	We show that every planar, $4$-connected, $K_{2,5}$-minor-free
graph is the square of a cycle of even length at least six.
\end{abstract}

\section{Introduction}

All graphs in this paper are finite and simple. A graph is a \emph{minor} of another graph if the first
can be obtained from a subgraph of the second by contracting edges and deleting any resulting
loops and parallel edges. We say that a graph \(G\) is \(H\)-\emph{minor-free} if \(H\) is not a minor of \(G\).

The most well-known result concerning characterizations of minor-free graphs is Wagner's demonstration in \cite{Wagner1937} that $K_5$ and $K_{3,3}$-minor-free graphs are precisely the planar graphs. 
A related result that follows from a different formulation of Wagner's theorem is that a $2$-connected graph is $K_{2,3}$-minor-free if and only if it is $K_4$ or outerplanar.
Other important results in this area include Dirac's \cite{Dirac1952} characterization of all $K_4$-minor-free graphs and more recently, Ding and Liu's \cite{Ding2013} description of $H$-minor-free graphs for all $3$-connected graphs $H$ on at most eleven edges.
In \cite{Ellingham2014}, Ellingham et. al.\ provide a complete characterization of all $K_{2,4}$-minor-free graphs. These types of questions have seen more attention since the conclusion of Robertson and Seymour's Graph Minors Project, which proved that all minor-closed families of graphs can be characterized by a finite set of forbidden minors. 

In this paper we focus on $K_{2,5}$-minor-free graphs. We suspect that this family is large and rather complex so we restrict our attention here to $4$-connected planar $K_{2,5}$-minor-free graphs. 
We choose $4$-connected because the characterization of $5$-connected $K_{2,5}$-minor-free graphs is easy: every $5$-connected graph either has $5$ disjoint paths between a non-adjacent pair of vertices (by Menger's theorem) and hence a $K_{2,5}$ minor, or is $K_6$.

To state our main result we need the following definition. The \emph{square} of a graph $G$, denoted by $G^2$, is a graph on the same vertex set as $G$, with pairs of vertices adjacent in $G^2$ if they are at distance at most two in $G$ (see Fig.~\ref{fig:squaredCycle}).  
\begin{thm} \label{thm:main}
	A graph is planar, $4$-connected and $K_{2,5}$-minor-free if and only if it is the square of a cycle of even length at least $6$.
\end{thm}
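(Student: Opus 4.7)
Given $G = C_{2k}^2$ with $k \geq 3$, I would first exhibit a plane drawing obtained by nesting two $k$-cycles on the even- and odd-indexed vertices (which are edges of the squared cycle) and connecting them via the original cycle edges; the faces of this embedding consist of $2k$ triangles and two $k$-gons. Every vertex has degree $4$, and a short case analysis confirms that removing any three vertices still leaves the graph connected, because the two concentric $k$-cycles together with the spoke edges form a highly redundant structure. Finally, ruling out a $K_{2,5}$ minor amounts to showing that seven connected branch sets with the required bipartite adjacency pattern cannot fit inside the linear sub-arc structure of the underlying cycle without forcing an overlap.

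\textbf{Hard direction: maximum degree.} Assume $G$ is planar, $4$-connected, and $K_{2,5}$-minor-free. I would first show that every vertex has degree exactly $4$; the lower bound is immediate from $4$-connectivity. For the upper bound, suppose $v$ has $d \geq 5$ neighbors $u_1, \ldots, u_d$ in cyclic order around $v$ in a fixed plane embedding. The goal is to find a second vertex $w$ together with five internally disjoint paths from $v$ to $w$ (one through each $u_i$), yielding a $K_{2,5}$ minor with branch sets $\{v\}$, $\{w\}$, and the singletons $\{u_i\}$. Menger's theorem alone only guarantees four such paths in a $4$-connected graph, so the planar embedding must be invoked to produce the fifth; intuitively the cyclic arrangement of the $u_i$ around $v$ forces an ``outer'' route through $G - v$ beyond what pure connectivity supplies.

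\textbf{Hard direction: Hamiltonian structure and chord analysis.} Once $G$ is $4$-regular, Euler's formula gives $|F| = |V| + 2$ with average face size just under $4$. By Tutte's theorem every $4$-connected planar graph is Hamiltonian, so $G$ has a Hamiltonian cycle $H = v_1 v_2 \cdots v_n$, and the remaining $n$ edges of $G$ form a $2$-regular spanning subgraph --- a disjoint union of ``chord cycles'' on the $v_i$. The technical heart of the proof is to show every non-$H$ edge is a length-two chord $v_i v_{i+2}$. Assuming instead a chord $v_i v_j$ with $H$-distance at least $3$, the two arcs of $H$ together with the chord itself already yield three internally disjoint $v_i$-$v_j$ paths, and the task is to manufacture two more from the second $2$-factor to force a $K_{2,5}$ minor. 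Once every non-$H$ edge is known to be short, $4$-regularity forces \emph{every} possible short chord to be present, so $G = C_n^2$. The conclusion $n \geq 6$ follows by excluding $n = 4$ ($C_4^2 = K_4$, too few vertices to be $4$-connected) and $n = 5$ ($C_5^2 = K_5$, non-planar); odd $n \geq 7$ must then be ruled out by showing $C_n^2$ either fails planarity or contains a $K_{2,5}$ minor, so that the even-parity normal form in the theorem emerges.

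\textbf{Main obstacle.} I expect the chord-length step to be the most delicate: extending three internally disjoint paths to five requires simultaneously using the planar embedding (to constrain how the $2$-factor of non-$H$ edges nests relative to $H$) and the rigid $4$-regular degree pattern. A related subtlety appears already in the degree-bound step, since $4$-connectivity alone yields only four internally disjoint paths; the existence of the fifth path needed to build a $K_{2,5}$ rests on a genuinely planar argument about the cyclic link of a high-degree vertex rather than on connectivity alone.
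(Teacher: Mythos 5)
There is a fatal gap at what you yourself identify as the technical heart. The claim you plan to prove --- that every non-Hamiltonian edge is a chord of $H$-distance two --- is false for an arbitrary Hamiltonian cycle, and the target graphs themselves witness this: in $C_6^2$ (vertices $1,\dots,6$, all pairs at cycle-distance at most two adjacent), the sequence $1,3,2,4,5,6$ is a Hamiltonian cycle $H'$, and the edge $35$ is a chord at $H'$-distance three; yet $C_6^2$ has only six vertices and so cannot contain any $K_{2,5}$ minor at all. Hence no argument can ``manufacture'' a $K_{2,5}$ minor from the mere existence of a long chord, and since Tutte's theorem gives you no control over which Hamiltonian cycle you get, the edge-by-edge contradiction scheme collapses; at best you could hope for the existential statement that \emph{some} Hamiltonian cycle has only short chords, which is a different and much harder claim. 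The framing is also impossible on degree grounds: once $G$ is $4$-regular, both endpoints of a chord have degree four, so five internally disjoint paths between them (each needing a distinct edge at each endpoint) can never exist. The same single-vertex framing weakens your degree step: a $K_{2,5}$ minor needs the second center to be a \emph{connected branch set}, not a single vertex $w$, so you should not be hunting for five disjoint $v$--$w$ paths at all. You would also still need to verify that each neighbor $u_i$ of the high-degree vertex $v$ actually reaches outside $N[v]$, which requires observing that non-consecutive $u_i, u_j$ are non-adjacent (else $\{v,u_i,u_j\}$ is a $3$-cut) together with minimum degree four.

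The paper avoids Hamiltonicity entirely and supplies exactly the mechanisms you were missing. For the degree bound it proves that for any vertex $v$ in a $4$-connected planar graph, $N[v]$ is not a cut set (a minimal cut $S \subseteq N(v)$ of $G \setminus v$ would give a $K_{3,|S|} \supseteq K_{3,3}$ minor, contradicting planarity), so $C = G \setminus N[v]$ is connected and serves as the second branch set of a $K_{2,n}$ minor --- this is the genuinely planar input you correctly suspected was needed, but realized as a connectivity-of-the-complement lemma rather than a fifth path. It then shows every edge lies in a triangle, invokes Martinov's classification ($4$-connected, $4$-regular, every edge in a triangle implies squared cycle or line graph of a cubic, cyclically $4$-edge-connected graph), and eliminates the line-graph case by exhibiting a $K_{2,5}$ minor in $L(H)$ for every cubic $3$-connected $H \not\cong K_4$; odd squared cycles die because $C_n^2$ has $C_5^2 \cong K_5$ as a minor for odd $n \ge 5$, which also disposes of the odd case you left open. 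For the converse, your ``branch sets cannot fit'' sketch is unsubstantiated, whereas the paper has a clean device: in the standard embedding with two $\frac{n}{2}$-gonal faces, three of the five singleton branch vertices of a putative $K_{2,5}$ minor lie on one such face, and adding an apex vertex inside that face produces a planar graph with a $K_{3,3}$ minor, a contradiction. If you want to salvage a Hamiltonian-cycle approach you must first solve the cycle-selection problem above; as proposed, the chord-elimination step cannot be repaired.
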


We introduce an equivalent definition of minor, which we will use in this paper. $H$ is a \emph{minor} of $G$ if for every vertex $v \in H$, there exists a connected subset of vertices $B_v \ss G$ called the \emph{branch set} of $v$ such that the branch sets of distinct vertices are disjoint and for each edge $vw$ of $H$, there is an edge in $G$ connecting $B_v$ and $B_w$. Note that if $G$ has $K_{2,t}$ or $K_{3,t}$ as a minor, we may assume that the branch sets of vertices in the part of size $t$ each consist of only a single vertex. If $G$ has $K_{2,5}$ with bipartition $(\{v_1, v_2\}, \{w_1, \ldots, w_5\})$ as a minor, where $\sizeof{B_{w_i}} = 1$, we will say that the minor is given by $\{B_{v_1},B_{v_2}; S\}$, where $S = \bigcup_{i=1}^5 B_{w_i}$.
We will analogously describe $K_{3,t}$ minors.

For a given graph $G$ and any vertex $v \in G$, the \emph{open neighborhood} $N(v)$ denotes the set of vertices of $G$ adjacent to $v$.
Similarly, for vertices $v_1, \ldots, v_n \in G$, $N(v_1, \ldots, v_n) = (\bigcup_{i=1}^n N(v_i))\setminus \{v_1, \ldots, v_n\}$.
The \emph{closed neighborhood} is defined to be $N[v] := N(v) \cup \{v\}$ and $N[v_1, \ldots, v_n] := N(v_1, \ldots, v_n) \cup \{v_1, \ldots, v_n\}$.

Given a graph $G$, its \emph{line graph} $L(G)$ is a graph with vertex set \(V(L(G)) = E(G)\) and edge set consisting of pairs of vertices of $L(G)$ whose corresponding edges in $G$ share a common endpoint.

\section{Proof of Theorem~\ref{thm:main}}

The proof of Theorem~\ref{thm:main} uses the following result of  Martinov  from \cite{Martinov1982}.
His result requires the following definition. A \emph{cyclically 4-edge-connected} graph is a $3$-edge-connected graph with no $3$-edge-cuts that leave a cycle in each component. 
\begin{thm}[Martinov, \cite{Martinov1982}] \label{thm:martinov}
	A 4-connected graph that is 4-regular and has every edge in a triangle is either a squared cycle of length at least five or the line graph of a cubic, cyclically 4-edge-connected graph.
\end{thm}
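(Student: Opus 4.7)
The plan is to analyze the induced subgraph $G[N(v)]$ for each vertex $v$. Since $G$ is $4$-regular and every edge at $v$ lies in a triangle whose third vertex must be another neighbor of $v$, the graph $G[N(v)]$ has four vertices and no isolated vertex. The possible isomorphism types are therefore the $2$-matching $2K_2$, the path $P_4$, the claw $K_{1,3}$, the cycle $C_4$, the paw, $K_4 - e$, and $K_4$.

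First I would use $4$-connectivity to prune this list. If $G[N(v)] \iso K_4$ then $\{v\} \cup N(v)$ induces $K_5$ in which every vertex already has degree $4$, so $G \iso K_5 = C_5^2$ and we are done. If $G[N(v)]$ is the paw or $K_4 - e$, then some vertex of $N(v)$ has all four of its neighbors inside $\{v\} \cup N(v)$, and the remaining two or three vertices of $N(v)$ form a cut separating a small set, contradicting $4$-connectivity. If $G[N(v)]$ contains $K_{1,3}$ as an induced subgraph, then $v$ together with three pairwise non-adjacent neighbors yields an induced claw in $G$; tracing the triangle condition at these three leaves forces a small cut.

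After these reductions, for every vertex $v$ the graph $G[N(v)]$ is one of $2K_2$, $P_4$, or $C_4$. The key step, and in my view the main obstacle, is to show that the type is the same for every vertex. I would argue this by choosing an edge $uv$ and counting the triangles of $G$ containing $uv$: each such triangle contributes an edge of $G[N(v)]$ incident to $u$, so the number of triangles on $uv$ equals the degree of $u$ in $G[N(v)]$, which must also equal the degree of $v$ in $G[N(u)]$. This compatibility across every edge, combined with connectedness of $G$, propagates to global uniformity of the neighborhood type.

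Finally I would finish the three homogeneous cases. If every $G[N(v)]$ is $2K_2$, the two disjoint edges in $G[N(v)]$ give exactly two triangles through $v$, and these triangles partition $E(G)$ into triangles with each vertex lying in exactly two; by Krausz's criterion $G$ is the line graph $L(H)$ of a graph in which every vertex has degree $3$, so $H$ is cubic. A $3$-edge-cut of $H$ with a cycle on each side pulls back to a $3$-vertex cut of $L(H)$ separating two non-trivial parts, so $4$-connectivity of $G$ forces $H$ to be cyclically $4$-edge-connected. If every $G[N(v)]$ is $C_4$, a similar local analysis identifies $G$ with the octahedron $K_{2,2,2} = C_6^2$. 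If every $G[N(v)]$ is $P_4$, I let $f(v) \ss N(v)$ denote the two inner vertices of this path, show that $u \in f(v)$ is equivalent to $v \in f(u)$, and conclude that the edges $\{uv : u \in f(v)\}$ form a $2$-regular spanning subgraph of $G$; $4$-connectivity rules out this subgraph being a disjoint union of several cycles, so it is a Hamilton cycle $C_n$, and the remaining edges of $G$ connect vertices at distance exactly $2$ along $C_n$, yielding $G = C_n^2$ with $n \ge 7$.
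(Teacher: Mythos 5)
First, a point of comparison: the paper does not prove this statement at all --- it is quoted as Theorem~\ref{thm:martinov} directly from Martinov's 1982 paper and used as a black box. So your attempt can only be measured against the literature, not against an in-paper argument. Your opening moves are sound: the classification of $G[N(v)]$ into the seven graphs on four vertices with no isolated vertex is complete, and the elimination of $K_4$ (giving $K_5 \iso C_5^2$), the diamond, the paw, and $K_{1,3}$ via small cuts all check out (for the claw, $\{b,c,d\}$ cuts off $\{v,a\}$, where $a$ is the center).

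The genuine gap is exactly at the step you yourself flag as the main obstacle: your propagation argument for uniformity of the neighborhood type does not work as stated. Counting triangles on an edge $uv$ gives only that $\deg_{G[N(v)]}(u) = \deg_{G[N(u)]}(v)$, and since $P_4$ has degree sequence $(1,1,2,2)$ it is edge-compatible with both $2K_2$ (all degrees $1$) and $C_4$ (all degrees $2$); so this local compatibility, even combined with connectedness, permits mixed types. Worse, uniformity is simply false without $4$-connectivity: take $H$ to be the triangular prism (two disjoint triangles joined by a perfect matching) and consider $L(H)$. This graph is $4$-regular, connected, and has every edge in a triangle, yet a vertex of $L(H)$ coming from a triangle edge of $H$ has neighborhood type $P_4$, while one coming from a matching edge has type $2K_2$. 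Since your propagation step invokes only $4$-regularity, the triangle condition, and connectedness, it would ``prove'' uniformity for $L(H)$ as well; hence the argument as written must fail, and any correct version has to use $4$-connectivity in an essential way (in the example, the three matching-edge vertices form a $3$-cut). There are secondary gaps of the same character in your endgame: in the $P_4$ case you assert, without argument, both that $4$-connectivity prevents the $2$-regular subgraph $F = \{uv : u \in f(v)\}$ from splitting into several cycles and that every remaining edge joins vertices at distance exactly two along $F$; the latter requires resolving a real local ambiguity (for a neighbor $p_2 \in f(v)$, the path $G[N(p_2)]$ can a priori be ordered in two ways, only one of which is consistent with the squared-cycle structure), and similarly the $C_4$ case's closure to the octahedron must rule out a $C_4$-type vertex having a $P_4$-type neighbor. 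This cluster of uniformity and rigidity claims is precisely where the substance of Martinov's proof lies, so the proposal is a reasonable strategy outline rather than a proof.
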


Additionally, our proof uses the following lemmas, each of which describes the structure of planar, $4$-connected, $K_{2,5}$-minor-free graphs.
These lemmas together with Martinov's theorem imply that $4$-connected, $K_{2,5}$-minor-free graphs graphs must be the squares of cycles of length at least five. We then show that only even squared cycles of length at least six are
both planar and 4-connected, thus finishing the proof.
\begin{lem} \label{lem:nbhdCutSet}
	For any vertex $v$ in a 4-connected planar graph $G$, $N[v]$ is not a cut set.
\end{lem}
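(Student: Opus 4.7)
I would argue by contradiction: assume $N[v]$ is a cut set of $G$, so $G - N[v]$ has distinct components $A$ and $B$, and pick $a \in A$, $b \in B$. The strategy is to exhibit a $K_{3,3}$ minor of $G$, contradicting planarity.

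The starting point is Menger's theorem applied to the 4-connected graph $G$, which yields four internally disjoint $a$--$b$ paths $P_1, P_2, P_3, P_4$. Each $P_i$ must meet $N[v]$ since $N[v]$ separates $A$ from $B$; let $x_i$ be the first and $y_i$ the last vertex of $P_i$ in $N[v]$ (traversing from $a$ to $b$). Because $a, b \notin N[v]$, these are internal vertices of their paths, so internal disjointness makes the $x_i$ pairwise distinct (and likewise the $y_i$). The vertex $v$ itself can be internal to at most one $P_i$, so after relabeling I may assume $P_1, P_2, P_3$ all avoid $v$; in particular $x_1, x_2, x_3 \in N(v)$, giving the edges $vx_1, vx_2, vx_3$.

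Next I would construct a $K_{3,3}$ minor with bipartition $(\{a, v, b\}, \{x_1, x_2, x_3\})$. For $i = 1, 2, 3$, split $P_i$ into three consecutive subpaths $\alpha_i$ from $a$ to $x_i$, $\mu_i$ from $x_i$ to $y_i$, and $\beta_i$ from $y_i$ to $b$. Take the branch sets
\[
B_a = \bigcup_{i=1}^{3} (\alpha_i \setminus \{x_i\}), \qquad B_b = \bigcup_{i=1}^{3} (\beta_i \setminus \{y_i\}), \qquad B_v = \{v\}, \qquad B_{x_i} = \mu_i.
\]
The required nine adjacencies come from the edge preceding $x_i$ on $P_i$ (inside $B_a$), the edge $vx_i$, and the edge following $y_i$ on $P_i$ (inside $B_b$); connectivity of each branch set is immediate since the $\alpha_i$ share $a$ and the $\beta_i$ share $b$.

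The main obstacle is verifying disjointness of these seven branch sets, since a middle segment $\mu_i$ may re-enter $N[v]$ or wander through $A$, $B$, or other components of $G - N[v]$. This reduces to three observations: $\alpha_i \setminus \{x_i\}$ and $\mu_i$ are disjoint subsegments of the simple path $P_i$ (and likewise $\mu_i$ and $\beta_i \setminus \{y_i\}$); the internal disjointness of $P_1, P_2, P_3$ prevents any segment of one path from meeting a segment of another; and $v$ lies on none of $P_1, P_2, P_3$, so $B_v$ is disjoint from every $\mu_i$. Once the $K_{3,3}$ minor is in hand, planarity is contradicted, completing the proof.
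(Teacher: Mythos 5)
Your proof is correct, but it takes a different route from the paper's. The paper's argument is shorter and more structural: since $G$ is $4$-connected, $G \setminus v$ is $3$-connected, and if $N[v]$ is a cut set of $G$ then $N(v)$ contains a cut set of $G \setminus v$; the paper takes a \emph{minimal} such cut set $S \subseteq N(v)$ (so $|S| \ge 3$) and uses the standard fact that every vertex of a minimal cut set has a neighbor in each component, so that two whole components $C_1, C_2$ of $G \setminus (\{v\} \cup S)$ together with $\{v\}$ immediately give a $K_{3,|S|}$ minor $(\{v\}, C_1, C_2; S)$ --- adjacency of branch sets is automatic from minimality, with no path bookkeeping. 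You instead invoke Menger's theorem between concrete vertices $a$ and $b$ on opposite sides, discard the (at most one) path through $v$, and assemble branch sets from path segments; your middle segments $\mu_i$ are a nice device, since they absorb the possibility that a path re-enters $N[v]$ or wanders through other components, and your disjointness checks (consecutive segments of a simple path, internal disjointness across paths, $v$ avoiding $P_1, P_2, P_3$) are all sound. Both proofs manufacture the same shape of $K_{3,3}$ minor --- one side being $v$ plus material from two components, the other side three neighbors of $v$ --- and contradict planarity; the trade-off is that the paper's minimal-cut-set trick buys brevity, while yours is self-contained modulo Menger at the cost of the explicit branch-set verification.
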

\begin{proof}
	Suppose, to the contrary, that for some $v\in V(G)$, $N[v]$ is a cut set.
	Let $S \ss N(v)$ be a minimal cut set of the 3-connected graph $G \setminus v$.
	Note that $|S|\geq 3$.
	Let $C_1$ and $C_2$ to be two distinct components of $G \setminus (\{v\} \cup S)$.
	Then $(\{v\}, C_1, C_2; S)$ gives a $K_{3,|S|}$ minor, and in particular a $K_{3,3}$ minor, contradicting planarity.
\end{proof}

\begin{lem} \label{lem:4reg}
	If $G$ is a planar, 4-connected, $K_{2,5}$-minor-free graph, then $G$ is 4-regular.
\end{lem}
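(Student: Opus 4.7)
The plan is to assume for contradiction that some vertex $v \in V(G)$ has $\deg(v) = d \geq 5$, set $N(v) = \{w_1, \ldots, w_d\}$ and $U := V(G) \setminus N[v]$, and derive a contradiction by producing either a $K_{2,5}$-minor of $G$ (forbidden by hypothesis) or a $K_{3,3}$-minor (forbidden by planarity). Two ingredients are used throughout: $U$ is connected or empty by Lemma~\ref{lem:nbhdCutSet}, and $G - v$ is $3$-connected since $G$ is $4$-connected.

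\case{Case $U = \emptyset$.} Then $V(G - v) = N(v)$. If $G - v = K_d$, then $G$ contains $K_5$ (as $d \geq 5$), contradicting planarity. Otherwise choose non-adjacent $w_i, w_j$ in $G - v$ and use $3$-connectivity of $G - v$ with Menger's theorem to obtain three internally disjoint $w_i$--$w_j$ paths $P_1, P_2, P_3$, all of length $\geq 2$. Let $B_k = V(P_k) \setminus \{w_i, w_j\}$; these are non-empty, connected, pairwise disjoint subsets of $N(v)$. Since $v$ is adjacent to every vertex of $N(v)$, the set $\{v\}$ neighbors each $B_k$; since $P_k$ runs from $w_i$ to $w_j$, both $\{w_i\}$ and $\{w_j\}$ neighbor each $B_k$. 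The branch sets $\{v\}, \{w_i\}, \{w_j\}$ and $B_1, B_2, B_3$ thus give a $K_{3,3}$-minor of $G$.

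\case{Case $U \neq \emptyset$.} Let $W := N(U) \cap N(v)$. Every edge leaving $U$ lands in $W$, so $W$ separates $U$ from $\{v\} \cup (N(v) \setminus W)$ and $|W| \geq 4$ by $4$-connectivity. If $|W| \geq 5$, choose $w_{i_1}, \ldots, w_{i_5} \in W$: then $\{\{v\}, U; \{w_{i_1}, \ldots, w_{i_5}\}\}$ realizes a $K_{2,5}$-minor, contradicting $K_{2,5}$-minor-freeness. If instead $|W| = 4$, then $N(v) \setminus W$ is non-empty (as $d \geq 5$); let $B$ be a connected component of $G[N(v) \setminus W]$. Since $B \cap W = \emptyset$, no vertex of $B$ is adjacent to $U$; also $B$ has no $G$-neighbors in $(N(v) \setminus W) \setminus B$; so in $G - v$ every edge leaving $B$ enters $W$. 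If $|N(B) \cap W| \leq 2$, then removing those $\leq 2$ vertices from $G - v$ would separate $B$ from $U$, contradicting $3$-connectivity of $G - v$. Hence $|N(B) \cap W| \geq 3$, and $\{v\}, U, B$ together with three singletons from $N(B) \cap W$ form a $K_{3,3}$-minor of $G$.

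The delicate step is the subcase $|W| = 4$: only four common neighbors of $v$ and $U$ are directly visible, which is not enough for a $K_{2,5}$-minor using $v$ as a hub, so one instead promotes a component $B$ of $N(v) \setminus W$ (guaranteed to exist precisely because $d > 4$) to a third branch set and uses the $3$-connectivity of $G - v$ to guarantee that $B$ has enough neighbors in $W$ to close up a $K_{3,3}$-minor.
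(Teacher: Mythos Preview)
Your proof is correct, but it follows a different route from the paper's. The paper fixes a planar embedding, labels the neighbors $w_1,\ldots,w_d$ cyclically around $v$, and observes that $w_i$ cannot be adjacent to any $w_j$ with $j\ne i\pm 1$ (otherwise the triangle $vw_iw_j$ would give a $3$-cut). Hence each $w_i$ has at most three neighbors inside $N[v]$, and since $\deg(w_i)\ge 4$, every $w_i$ has a neighbor in $U=G\setminus N[v]$. Thus $U\ne\emptyset$ and $W=N(v)$ automatically, so $|W|=d\ge 5$ and $(\{v\},U;\{w_1,\ldots,w_d\})$ is a $K_{2,d}$ minor in one stroke. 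Your argument instead avoids the embedding step and treats the cases $U=\emptyset$ and $|W|=4$ head-on, manufacturing $K_{3,3}$ minors via Menger's theorem and a component $B$ of $G[N(v)\setminus W]$. What the paper's approach buys is brevity: the geometric observation that non-consecutive neighbors of $v$ cannot be adjacent collapses all your side cases at once. What your approach buys is a more purely combinatorial argument that never appeals to the cyclic structure of the embedding; in particular, your subcase $|W|=4$ is handled cleanly even though, by the paper's reasoning, it can never actually occur.
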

\begin{proof}
%	Let $G$ be a planar, 4-connected, $K_{2,5}$-minor-free graph.
	To be 4-connected, $G$ must have minimum degree at least four.
	Assume that some vertex $v$ of $G$ has degree $n>4$.
	Fix a planar embedding of $G$ and label $v$'s neighbors $w_1, \ldots, w_n$, ordered clockwise around $v$.
%	(See Figure \ref{fig:highDegreeNbhd}.)
%	\begin{figure}[h]
%			\begin{ctikz}
%				\node[vertex, label=west:$v$] at (0,0) (v) {};
%				\node[vertex, label=north east:$w_1$] at (67.5:1) (w1) {};
%				\node[vertex, label=east:$w_2$] at (0:1) (w2) {};
%				\node[vertex, label=south east:$w_3$] at (-67.5:1) (w3) {};
%				\node[vertex, label=south west:$w_4$] at (-135:1) (w4) {};
%				\node[vertex, label=north west:$w_n$] at (135:1) (wn) {};
%				\node at (175:.9) {$\vdots$};
%				
%				\draw[edge] (v) to (w1);
%				\draw[edge] (v) to (w2);
%				\draw[edge] (v) to (w3);
%				\draw[edge] (v) to (w4);
%				\draw[edge] (v) to (wn);
%			\end{ctikz}
%			\caption{The neighborhood of a high degree vertex.}
%			\label{fig:highDegreeNbhd}
%		\end{figure}
	
	Note that $w_i$ cannot be adjacent to any $w_j$ for $j \ne i \pm 1$, taking the indices modulo $n$.
	Otherwise, $\{v, w_i, w_j\}$ would be a 3-cut.
	Because $d(w_i) \ge 4$ for each $i$, we see that $N(w_i) \nss N[v]$.
	
	By Lemma \ref{lem:nbhdCutSet}, $C=G\setminus N[v]$ is connected, so $G$ has a $K_{2,n}$ minor given by $((C, \{v\} ; \{w_1, \ldots, w_n\}))$.
	%Each component of $G \setminus N[v]$ must be adjacent to exactly four of the $w_i$ (because more would give a $K_{2,5}$ minor and fewer would contradict 4-connectedness), so $G \setminus N[v]$ must have at least two components, contradicting Lemma \ref{lem:nbhdCutSet}.
	This contradicts our choice of $G$, so $G$ must be  4-regular.
\end{proof}

\begin{lem} \label{lem:triangles}
	If $G$ is a planar, 4-connected, $K_{2,5}$-minor-free graph, then every edge of $G$ is in a triangle.
\end{lem}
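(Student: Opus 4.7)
The plan is to proceed by contradiction. Suppose $uv\in E(G)$ lies in no triangle. Then, since $G$ is 4-regular (Lemma~\ref{lem:4reg}) and $N(u)\cap N(v)=\emptyset$, we may label $N(u)\setminus\{v\}=\{a_1,a_2,a_3\}$ and $N(v)\setminus\{u\}=\{b_1,b_2,b_3\}$ with the six ``outer'' vertices all distinct. Let $T=V(G)\setminus(N[u]\cup N[v])$.

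I would aim to construct a $K_{2,5}$ minor of $G$ with $B_{v_1}=\{u,v\}$, which is connected via the edge $uv$. A vertex $w\notin\{u,v\}$ is adjacent to $B_{v_1}$ iff $w\in\{a_1,a_2,a_3,b_1,b_2,b_3\}$, since $T$-vertices lie outside $N(u)\cup N(v)$. Therefore the five singleton branch sets $w_1,\dots,w_5$ of the sought minor must be chosen among the six outer vertices, and the remaining outer vertex $w_0$ together with (part of) $T$ forms the second large branch set $B_{v_2}$, disjoint from $B_{v_1}$ and from the $w_i$'s.

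The structural tools are Lemma~\ref{lem:nbhdCutSet} applied to $u$ and to $v$, which gives that $G\setminus N[u]=\{b_1,b_2,b_3\}\cup T$ and $G\setminus N[v]=\{a_1,a_2,a_3\}\cup T$ are both connected, together with the 4-regularity observation that each $a_i$ (resp.\ each $b_j$) has a neighbor in $\{b_1,b_2,b_3\}\cup T$ (resp.\ $\{a_1,a_2,a_3\}\cup T$), because at most two of its three non-$u$/$v$ neighbors can lie among the other vertices of its own group. Combining these, I would show that some outer vertex $w_0$ can be chosen so that $B_{v_2}=T\cup\{w_0\}$ is connected and every other outer vertex has a neighbor in $B_{v_2}$; the other five outer vertices then serve as $w_1,\dots,w_5$, giving the $K_{2,5}$ minor needed for the contradiction.

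The hard part is verifying the existence of such a $w_0$ in all configurations — in particular when $T$ is small or disconnected, or when some outer vertex has no neighbor in $T$. For those degenerate subcases I would invoke Menger's theorem in $G-uv$ (which is 3-connected because $G$ is 4-connected) to obtain three internally disjoint $u$-$v$ paths, then lean on the planar embedding: the three paths together with the edge $uv$ partition the plane into four regions, each $T$-vertex lies in exactly one of them, and the rotation at $u$ and $v$ restricts which pairs of outer vertices can be joined through $T$. A short planar case analysis should then either produce the required $K_{2,5}$ directly or force a $K_{3,3}$ minor, contradicting planarity.
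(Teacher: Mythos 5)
Your opening matches the paper's proof exactly (an edge $uv$ in no triangle, six distinct outer vertices via Lemma~\ref{lem:4reg}, the goal of a $K_{2,5}$ minor with $\{u,v\}$ as one branch set, and a fallback to a $K_{3,3}$ minor where planarity must be invoked), but the entire substance of the lemma is deferred to an unexecuted ``short planar case analysis,'' and the one idea that makes that analysis short is missing. The paper's pivot is a counting observation: every component $C$ of $G \setminus N[u,v]$ is adjacent to \emph{exactly four} of the six outer vertices --- at least four because $u$ and $v$ have no neighbors outside $N[u,v]$, so three or fewer attachments would form a cut set of size at most three; and at most four because five attachments immediately yield the $K_{2,5}$ minor $(\{u,v\}, C; S)$ with $S$ five attachments of $C$ (every outer vertex is adjacent to $u$ or $v$). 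With this count the proof collapses to two short cases. One component $C$: exactly two outer vertices $x,y$ miss $C$, 4-regularity forces $N(x),N(y) \ss N[u,v]$, and either $x,y$ have a common neighbor $z$ among the outer vertices (your construction, with $w_0=z$) or else $x \sim y$ with their remaining neighbors covering the other four outer vertices, giving a $K_{3,4}$ minor $(C, \{u,v\}, \{x,y\}; N(u,v)\setminus\{x,y\})$ and contradicting planarity. Two or more components: $C_1$ and $C_2$ each see four of six outer vertices, so either they jointly cover all six (a common attachment $x$ makes $\{x\} \cup C_1 \cup C_2$ your second branch set) or they share at least three attachments, giving a $K_{3,3}$ minor $(C_1, C_2, \{u,v\}; \cdot)$, again contradicting planarity.

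Without the attachment count, your plan does not go through as stated. Your restricted minor shape $B_{v_2} = T \cup \{w_0\}$ is not always realizable: if, say, $G \setminus N[u,v]$ had two components with the same four attachments, no single $w_0$ both connects $B_{v_2}$ and feeds the two missed outer vertices; such configurations can only be excluded by planarity (the $K_{3,3}$ branch), and you never specify how that exclusion runs. Your proposed fallback --- Menger's three internally disjoint $u$--$v$ paths plus the edge $uv$ cutting the plane into four regions --- is precisely the kind of global analysis that becomes unwieldy when $T$ is disconnected, which is exactly the situation you invoke it for, and none of it is carried out; the paper needs no path system at all, only the local attachment count. Finally, you never verify $T \neq \emptyset$: the paper explicitly rules out $V(G) = N[u,v]$ by checking that an outer vertex could not then have degree four without creating a triangle through $uv$ or a 3-cut, whereas your construction silently assumes $T$ is nonempty.
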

\begin{proof}
	Assume to the contrary that $G$ has an edge $ab$ not part of any triangle.
	We know from Lemma \ref{lem:4reg} that $G$ is 4-regular, so $a$ and $b$ each have three neighbors, all distinct vertices.
	Fix a planar embedding of $G$ and label these vertices as seen in Figure \ref{fig:abNbhd}, ordered as they appear in this embedding.
	\begin{figure}
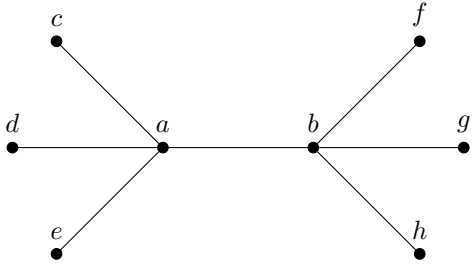

		\begin{ctikz}[scale=2]
			\draw[edge] (0,0)node[vertex, label=above:{$a$}] (a) {}
			   -- ++(135:1)  node[vertex, label=above:{$c$}] (c) {} (a)
			   -- ++(180:1)  node[vertex, label=above:{$d$}] (d) {} (a)
			   -- ++(-135:1) node[vertex, label=above:{$e$}] (e) {} (a)
			   -- ++(1,0)    node[vertex, label=above:{$b$}] (b) {}
			   -- ++(45:1)   node[vertex, label=above:{$f$}] (f) {} (b)
			   -- ++(0:1)    node[vertex, label=above:{$g$}] (g) {} (b)
			   -- ++(-45:1)  node[vertex, label=above:{$h$}] (h) {};
		\end{ctikz}
		\caption{The neighborhood around an edge $ab$ in $G$, not part of a triangle.
		Neighbors are shown as they appear in a planar embedding.}
		\label{fig:abNbhd}
	\end{figure}
	
	Note that $G$ cannot have exactly these eight vertices, because in particular $c$ must have degree four and there are not three other vertices in $N[a,b]$ that $c$ can be adjacent to.
	More specifically, $c \not\sim b$ because $ab$ is not in a triangle, and $c \sim e, g, h$ would yield the 3-cuts $\{a, c, e\}$, $\{b, c, g\}$, and $\{b, c, h\}$ respectively, while $G$ is assumed to be 4-connected.
	So $|V(G)| \ge 9$.
	
	Any component of $G \setminus N[a,b]$ must be adjacent to exactly four vertices in $N(a,b)$ in order for $G$ to be 4-connected without introducing a $K_{2,5}$ minor.
	
	\case{Case 1}: $G \setminus N[a,b]$ has only one component, $C$.\\
	Then $C$ is nonadjacent to exactly two vertices $x,y \in N(a,b)$, which is to say $N(x),N(y) \ss N[a,b]$.
	If $x$ and $y$ have a common neighbor $z \in N(a,b)$, then $G$ has a $K_{2,5}$ minor given by $(C \cup \{z\}, \{a,b\}; N(a,b) \setminus z)$.
	
	If $x$ and $y$ do not have a common neighbor, we must have $x \sim y$ and $N(a,b) \ss N[x,y]$ (the two additional neighbors from $x$ and $y$, all distinct, will completely cover the four remaining vertices of $N(a,b)$).
	Then $(C, \{a,b\}, \{x,y\}; N(a,b) \setminus \{x,y\})$ gives a $K_{3,4}$ minor, contradicting the planarity of $G$.
	
	\case{Case 2}: $G \setminus N[a,b]$ has more than one component.\\
	Take any two of them, $C_1$ and $C_2$.
	If $C_1$ and $C_2$ together are adjacent to every vertex of $N(a,b)$, then let $x \in N(a,b)$ be one of the two vertices adjacent to both.
	Then $G$ has a $K_{2,5}$ minor given by $(\{a,b\}, \{x\} \cup C_1 \cup C_2; N(a,b) \setminus x)$.
	Otherwise, $C_1$ and $C_2$ must have at least three common neighbors, so let $S \ss N(a,b)$ consist of any three of them.
	Then $G$ has a $K_{3,3}$ minor given by $(C_1, C_2, \{a,b\}; S)$, contradicting planarity.
	
	Either way, we see that every edge of $G$ must be in a triangle.
\end{proof}

Note that the next lemma does not assume planarity.
\begin{lem} \label{lem:notLineGraph}
	The line graph of any cubic, 3-connected graph $H$ has $K_{2,5}$ as a minor, unless $H\iso K_4$.
\end{lem}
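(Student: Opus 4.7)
The plan is to exhibit a $K_{2,5}$-minor of $L(H)$ directly, by choosing an appropriate partition $V(H) = A \sqcup B$ and reading off branch sets. I aim to find a partition satisfying (i) $H[A]$ and $H[B]$ are each connected on at least two vertices, and (ii) $|E(A, B)| \geq 5$. From such a partition, let $B_{v_1} := E(H[A])$ and $B_{v_2} := E(H[B])$ (viewed as subsets of $V(L(H)) = E(H)$), and choose any five crossing edges $C_1, \ldots, C_5 \in E(A, B)$ as singleton branch sets. Each $B_{v_i}$ is connected in $L(H)$ because the line graph of a connected graph with at least one edge is connected; all seven sets are pairwise disjoint; and each $C_j = a_j b_j$ with $a_j \in A$, $b_j \in B$ is adjacent in $L(H)$ to $B_{v_1}$ via an edge of $H[A]$ incident to $a_j$ (which exists because $H[A]$ is connected on $\geq 2$ vertices, so $a_j$ has a neighbor in $A$), and likewise adjacent to $B_{v_2}$.

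The natural candidate is $A = \{a_1, a_2, a_3\}$, the vertex set of an induced path $a_1 - a_2 - a_3$ of length two in $H$. Such a $P_3$ exists: some vertex $v$ must have two non-adjacent neighbors, for otherwise $N[v]$ would induce a $K_4$ which, being cubic already, would form a connected component of $H$, forcing $H = K_4$. With this choice, $|E(H[A])| = 2$, and since $H$ is cubic, $|E(A, B)| = 3|A| - 2|E(H[A])| = 5$, giving (ii).

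The crux is verifying that $H - A$ is connected (so (i) holds; $|B| \geq 3$ follows because $|V(H)| \geq 6$, as $H$ is cubic and $H \neq K_4$). Suppose for contradiction that $A$ is a vertex cut; then 3-connectivity makes $A$ a minimum vertex cut. A standard fact about minimum cuts says that every cut vertex has a neighbor in every component of the complement, since otherwise one could delete that vertex from the cut and obtain a smaller one. However, the middle vertex $a_2$ uses two of its three edges on the path edges $a_1 a_2$ and $a_2 a_3$, leaving only one edge into $B$; hence $H - A$ can have at most one component, contradicting the assumption.

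I do not anticipate a serious technical obstacle: once the partition viewpoint is set up, the branch-set construction is mechanical, and the connectivity argument for $H - A$ is short, with the leverage of 3-connectivity through the middle vertex of the induced $P_3$ being the pivotal observation. The exclusion of $H = K_4$ is needed for a separate reason: $L(K_4)$ is the octahedron $K_{2,2,2}$, which has only six vertices and therefore cannot contain the seven-vertex $K_{2,5}$ as a minor.
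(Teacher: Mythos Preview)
Your proof is correct and follows essentially the same route as the paper: both arguments locate an induced path $a_1a_2a_3$ in $H$, take its two edges as one branch set, the five edges crossing from $\{a_1,a_2,a_3\}$ to its complement $B$ as the singleton branch sets, and $E(H[B])$ as the remaining branch set, with connectivity of $H[B]$ deduced from the fact that the middle vertex $a_2$ has only one neighbor outside the path. The only cosmetic difference is that the paper finds the induced $P_3$ by first taking an edge not in a triangle and extending it, and phrases the connectivity step as ``$v$ has degree one in $H\setminus\{u,w\}$, hence is not a cut vertex'' rather than via the minimum-cut neighbor property.
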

\begin{proof}
	%Note that a cyclically 4-edge-connected cubic graph that is not isomorphic to $K_4$ is triangle free.
	%(If there was a triangle between vertices $a$, $b$, and $c$, then the other edge adjacent to each would form a 3-edge cut in which one part contained a cycle.
	%Because the graph is cyclically 4-edge connected, the other part must be a single vertex, i.e.\ the graph is $K_4$.)
	Consider any cubic, 3-connected graph $H$ not isomorphic to $K_4$.
	%, and we will see that $L(H)$ must have a $K_{2,5}$ minor.
	Note that any such graph must have some edge, $uv$, not in a triangle.
	Let $w$, $x$ and $y$,  $z$ be the (necessarily distinct) neighbors of  $u$ and $v$, respectively.
	Although the two neighbors of $w$ other than $v$ may not be distinct from $x$, $y$, and $z$, call them $s$ and $t$, as in Figure \ref{fig:uvDecoratedNbhd}.
	\begin{figure}[h]
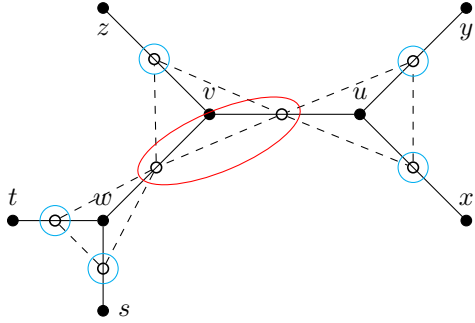

%		\begin{ctikz}[scale=2]
%				\draw[edge] (-1,1) node[vertex, label=above:{$x$}] (x) {}
%				   -- ++(-45:1) node[vertex, label=above:{$u$}] (u) {}
%				   -- ++(45:1) node[vertex, label=above:{$y$}] (y) {}
%				   (u)
%				   -- ++(0,-1) node[vertex, label=below:{$v$}](v) {}
%				   -- ++(-135:1) node[vertex, label=above:{$w$}] (w) {}
%				   (v)
%				   -- ++ (-45:1) node[vertex, label=below:{$z$}] (z) {};
%				
%				\draw[edge] (w)
%				   -- ++ (-90:.6) node[vertex, label=right:$t$] (t) {};
%				\draw[edge] (w)
%				   -- ++ (180:.6) node[vertex, label=below:$s$] (s) {};
%			\end{ctikz}
%			\caption{A neighborhood around $uv$ in $H$.}
%		\label{fig:uvUndecoratedNbhd}
		\begin{ctikz}[scale=2, rotate=-90, xscale=-1]
			\draw[edge] (-1,1) node[vertex, label=above:{$x$}] (x) {}
			   -- node[Lvertex] (ux) {}
			      ++(-45:1) node[vertex, label=above:{$u$}] (u) {}
			   -- node[Lvertex] (uy) {}
			      ++(45:1) node[vertex, label=below:{$y$}] (y) {}
			   (u)
			   -- node[Lvertex] (uv) {}
			      ++(0,-1) node[vertex, label=above:{$v$}](v) {}
			   -- node[Lvertex] (vw) {}
			      ++(-135:1) node[vertex, label=above:{$w$}] (w) {}
			   (v)
			   -- node[Lvertex] (vz) {}
			      ++ (-45:1) node[vertex, label=below:{$z$}] (z) {};
			
			\draw[edge] (w)
			   -- node[Lvertex] (wt) {}
			      ++ (-90:.6) node[vertex, label=above:$t$] (t) {};
			\draw[edge] (w)
			   -- node[Lvertex] (ws) {}
			      ++ (180:.6) node[vertex, label=right:$s$] (s) {};
			
			\draw[Ledge] (uv) -- (ux) -- (uy) -- (uv) -- (vw) -- (ws) -- (wt) -- (vw) -- (vz) -- (uv);
			
			\draw[rotate=67, part1] ($(uv)!0.5!(vw)$) ellipse (.58 and .2);
			\draw[part2] (ux) circle (.1);
			\draw[part2] (uy) circle (.1);
			\draw[part2] (ws) circle (.1);
			\draw[part2] (wt) circle (.1);
			\draw[part2] (vz) circle (.1);
		\end{ctikz}
		\caption{A neighborhood around $uv$ in $H$, along with the corrosponding portion of $L(H)$, highlighting the $K_{2,5}$ minor, with the branch set of the remaining vertex given by the entire rest of the line graph.}
		\label{fig:uvDecoratedNbhd}
	\end{figure}
	
	Because $H$ is 3-connected, $H \setminus \{u,w\}$ is connected.
	Note that $v$ cannot be a cut vertex of $H \setminus \{u,w\}$, so $H \setminus \{u,v,w\}$ is connected.
	Because in particular $x \ne y$, $H \setminus \{u,v,w\}$ must contain an edge, so it will induce a connected subgraph of $L(H)$ which avoids the edges $uv$, $ux$, $uy$, $vz$, $vw$, $ws$, and $wt$, which are all distinct.
	Then $L(H)$ has a $K_{2,5}$ minor given by $(\{uv, vw\}, L(H) \setminus N[uv,vw]; N(uv, vw))$, where neighborhoods are taken in $L(H)$.
	%Then $L(H)$ has a $K_{2,5}$ minor given by $R_1 = \{uv, vw\}$, $S = \{ux, uy, ws, wt, vz\}$, and $R_2 = L(H) \setminus (R_1 \cup S)$.
	See Figure \ref{fig:uvDecoratedNbhd}.
\end{proof}

\begin{proof}[Proof of Theorem \ref{thm:main}]
	Let $G$ be any planar, 4-connected, $K_{2,5}$-minor-free graph.
	By Theorem \ref{thm:martinov}, along with Lemmas \ref{lem:4reg} and \ref{lem:triangles}, we see that $G$ must be a squared cycle or the line graph of a cubic, cyclically 4-edge-connected graph.
	Noting that $L(K_4) \iso C_6^2$ and that a cubic, cyclically 4-edge-connected graph is, in particular, 3-connected, Lemma \ref{lem:notLineGraph} ensures that $G$ is a squared cycle of length at least five.

	%Note that a cubic, cyclically 4-edge-connected graph is, in particular, 3-connected.
	%Then Theorem \ref{thm:martinov}, along with Lemmas \ref{lem:4reg}, \ref{lem:triangles}, and \ref{lem:notLineGraph}, show that a planar, 4-connected, $K_{2,5}$-minor-free graph must be a squared cycle of length at least five.
	
	Now note that $C_n^2$ will have $C_{n-2}^2$ as a minor for all odd $n \ge 5$, so in particular will have $C_5^2 \iso K_5$ as a minor.
	Thus, $C_n^2$ is nonplanar for all such $n$, completing the forward direction of Theorem \ref{thm:main}.
	%Numbering the vertices around the original cycle $v_1,\ldots,v_n$ and contracting vertices of even index at least four to give $v_4'$ and vertices of odd index at least five to give $v_5'$ yields a $K_5$ minor.
	% of Theorem \ref{thm:main} now follows from the fact that 4-connected 4-regular simple graphs with every edge in a triangle are either the square of a cycle of length at least 5 or the line graph of a cyclically 4-edge connected cubic graph, along with the observation that $C_n^2$ is non-planar for odd $n>3$.	
	
	For the reverse direction, fix an even $n \ge 6$ and consider $C_n^2$.
	Note that $C_n^2$ can be embedded in the plane with two vertex disjoint faces $F_1$ and $F_2$ of degree $\frac{n}{2}$ connected by $n$ triangular faces.
	See, for example, Figure \ref{fig:squaredCycle}.
	\begin{figure}[h!]
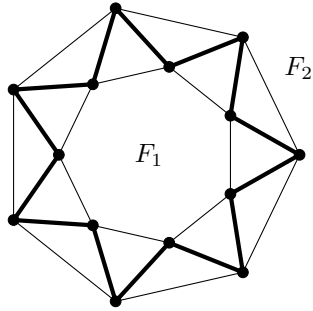

		\begin{ctikz}[scale=2]
			\def\n{7}
			\def\innerR{.6}
			\node at (0,0) {$F_1$};
			\node at (30:1.15) {$F_2$};
			\foreach \i in {1,2,...,\n}
			{
				\draw[edge] (\i/\n*360:1) node[vertex] {}
				         -- (\i/\n*360+1/\n*360:1)
				            (\i/\n*360-1/\n*180:\innerR) node[vertex] {}
				         -- (\i/\n*360+1/\n*180:\innerR);
				\draw[edge, ultra thick] (\i/\n*360-1/\n*180:\innerR)
				         -- (\i/\n*360:1)
				         -- (\i/\n*360+1/\n*180:\innerR);
			}
		\end{ctikz}
		\caption{An embedding of $C_{14}^2$, with the original 14-cycle shown in bold.}
		\label{fig:squaredCycle}
	\end{figure}
	
	Note that $C_n^2$ must be 4-connected, because any cut set must contain at least two vertices from each of $F_1$ and $F_2$.
	
	Suppose, now, that $C_{n}^2$ has a $K_{2,5}$ minor given by $(R_1, R_2; S)$.
	Then without loss of generality, $F_1$ contains three vertices $S' \ss S$.
	Form a new graph $G'$ from $G$ by adding a new vertex $v$ adjacent to these three vertices.
	Then $G'$ is planar, but has a $K_{3,3}$ minor given by $(R_1, R_2, \{v\}; S')$, a contradiction. Thus, $C_n^2$ is planar, 4-connected, and $K_{2,5}$-minor-free, for all even $n \ge 6$, completing the proof of Theorem \ref{thm:main}.
\end{proof}
\section{Acknowledgements}
The authors would like to thank Mark Ellingham for his insight and helpful suggestions while they worked on this problem.

\bibliography{k2t}

\end{document}